\newcommand{\supp}{\operatorname{supp}}
\newcommand{\C}{\mathbb{C}}
\renewcommand{\vec}[1]{#1}
\DeclareMathOperator{\op}{op}
\DeclareMathOperator{\tr}{tr}
\DeclareMathOperator{\Res}{Res}
\DeclareMathAlphabet{\varmathbb}{U}{bbold}{m}{n}
\newcommand{\one}{\mathds{1}}
\DeclareMathOperator*{\Exp}{\mathbb{E}}
\DeclareMathOperator{\eee}{\mathrm{e}}
\DeclareMathOperator{\fS}{\operatorname{S}}
\begin{document}

\newtheorem{theorem}{Theorem}
\newtheorem{lemma}{Lemma}
\newtheorem{claim}{Claim}
\newtheorem{proposition}{Proposition}
\newtheorem{exercise}{Exercise}
\newtheorem{corollary}{Corollary}
\theoremstyle{definition}
\newtheorem{definition}{Definition}

\newcommand{\eps}{\epsilon}
\newcommand{\Z}{{\mathbb{Z}}}
\newcommand{\F}{{\mathbb{F}}}
\newcommand{\Var}{\mathop\mathrm{Var}}
\newcommand{\Cov}{\mathop\mathrm{Cov}}
\renewcommand{\Re}{\mathop\mathrm{Re}}
\newcommand{\frob}[1]{\left\| #1 \right\|_{\rm F}}
\newcommand{\opnorm}[1]{\left\| #1 \right\|_{\rm op}}
\newcommand{\bopnorm}[1]{\bigl\| #1 \bigr\|_{\rm op}}
\newcommand{\norm}[1]{\left\| #1 \right\|}
\newcommand{\bnorm}[1]{\bigl\| #1 \bigr\|}
\newcommand{\abs}[1]{\left| #1 \right|}
\newcommand{\inner}[2]{\left\langle #1, #2 \right\rangle}

\newcommand{\wf}{\widehat{f}}
\newcommand{\wg}{\widehat{g}}
\newcommand{\wG}{\widehat{G}}
\newcommand{\wK}{\widehat{K}}
\newcommand{\wH}{\widehat{H}}
\newcommand{\wpsi}{\widehat{\psi}}
\newcommand{\wpsidag}{\widehat{\psi^\dagger}}
\newcommand{\U}{\textsf{U}}
\newcommand{\GL}{\textsf{GL}}
\newcommand{\SL}{\mathsf{SL}}
\newcommand{\fsub}{\operatorname{F}}
\newcommand{\dmin}{d_{\min}}
\newcommand{\planch}{\mathcal{P}}
\newcommand{\frack}[2]{#1 / #2}

\title{Group representations that resist random sampling}

\author{Shachar Lovett\\Computer Science and Engineering\\University of California, San Diego\\ \texttt{slovett@cs.ucsd.edu}  \and Cristopher Moore\\Santa Fe Institute\\{\texttt {moore@cs.unm.edu}} 
\and Alexander Russell\\Computer Science and Engineering\\University of Connecticut\\{\texttt{acr@cse.uconn.edu}}}

\maketitle

\begin{abstract}
We show that there exists a family of groups $G_n$ and nontrivial irreducible representations
$\rho_n$ such that, for any constant $t$, the
average of $\rho_n$ over $t$ uniformly random elements $g_1, \ldots,
g_t \in G_n$ has operator norm $1$ with probability approaching 1 as
$n \rightarrow \infty$. 
More quantitatively, we show that there exist families of finite
groups for which $\Omega(\log \log |G|)$ random elements are
required to bound the norm 
of a typical representation below $1$. This settles a conjecture of A.\ Wigderson. 
\end{abstract}

\section{Introduction}
The Alon-Roichman theorem~\cite{AR94} asserts that $O(\log
|G|/\epsilon^2)$ elements, chosen independently and uniformly from a
finite group $G$, yield with high probability a Cayley graph with
second eigenvalue no more than $\epsilon$. As there are groups for
which $O(\log |G|)$ elements are necessary to even generate the group,
this bound is tight up to a constant when $\epsilon = \Omega(1)$. The
condition that a collection of group elements $g_1, \ldots, g_t$ 
yield a graph with second eigenvalue $\epsilon$ is
equivalent to the condition that every nontrivial irreducible representation
$\rho$ of $G$ is approximately annihilated in the sense that
\begin{equation}
\label{eq:norm}
\left \| \frac{1}{t} \sum_{i=1}^t \frac{\rho(g_i) + \rho(g_i^{-1})}{2} \right\|_{\op} \leq \epsilon \, .
\end{equation}
(The appearance of the inverses $g_i^{-1}$ above corresponds to the
constraint that the Cayley graph be undirected.)

This invites a more refined analysis of the Alon-Roichman theorem
popularized by a question of A.\ Wigderson~\cite[Conjecture 2.8.4]{Barbados10}: \emph{Are there universal constants
$t$ and $\delta > 0$ such that
\begin{equation}
\label{eq:average-norm}
\Exp_{g_1,\ldots,g_t} \left[ \opnorm{\frac{1}{t} \sum_{i=1}^t \rho(g_i) } \right] \leq 1 - \delta
\end{equation}
 holds for all finite
groups $G$ and all nontrivial irreducible representations $\rho$ of
$G$?} As above, the $g_i$ are selected independently and uniformly
from $G$.

To support this notion, we remark that there are known families of ``highly
nonabelian'' finite groups, such as $\SL_2(\F_p)$ as $p \rightarrow
\infty$, which yield expanding Cayley graphs over a constant number of
uniformly random group elements $g_1,\ldots,g_t$~\cite{BG08}. It
follows that~\eqref{eq:average-norm} holds for every representation.
Likewise, \eqref{eq:average-norm} holds for all nontrivial irreducible representations of abelian
groups: for example, while no constant number of elements suffice to
make $\Z_2^n$ into an expander, or even to generate the group, just
$t=2$ uniformly random elements suffice to bound the expected norm of any one
irreducible representation to $1-\delta$, with a universal constant $\delta > 0$.

Distinct irreducible representations of a finite group possess various
independence properties when viewed as random variables by selecting a
uniformly random group element: for example, their entries are
pairwise uncorrelated in any basis. Intuitively, Wigderson's question
asks whether the dependence on $\log |G|$ in the Alon-Roichman theorem is an
artifact of the requirement that every representation be
annihilated---a collection of ostensibly independent events which one
might imagine occur with constant probability with the selection of
each new group element---or is a feature that can manifest even in a
single irreducible representation. Indeed, a positive answer to the
question would imply that $O(\log |\wG|)$ random elements suffice to
turn any group $G$ into an expanding Cayley graph, where $\hat{G}$ denotes the
set of irreducible representations of $G$.


We answer this question in the negative. Our strategy will be to work
in a family of finite groups of the form $G=K^n$, where $K$ has constant size, is nonabelian, and has trivial center.  
Such groups simultaneously possess high-dimensional representations and the
property that any collection of a bounded number of group elements
generate a subgroup of bounded size. In this setting, for each
constant $t > 0$, we establish two results:
\begin{itemize}
\item When $K$ has a faithful irreducible representation $\rho$ of dimension at least
two, we show that the representation $\rho^n = \rho \otimes \cdots \otimes
\rho$ of $G = K^n$ has the property that
\[
\Pr_{g_1, \ldots, g_t}\left[ \left\| \frac{1}{t} \sum_i \rho^n(g_i) \right\|_{\op} =
1\right] \geq 1 - \exp(-\Omega(n)) \, ,
\]
where the $g_i$ are elements of $G$ chosen uniformly and independently
at random. See Theorem~\ref{thm:main-explicit} for a more precise statement.
\end{itemize}
This result holds, for instance, if $K=S_3$, the group of permutations of three elements, 
and $\rho$ is its two-dimensional representation. 
We also show
\begin{itemize}
\item When $\rho$ is selected according to the Plancherel measure, which
  assigns each irreducible representation probability mass proportional to the
  square of its dimension, we show that with probability $1 -
  O(1/\sqrt[4]{n})$, $\rho$ has the property that
\[
\Pr_{g_1, \ldots, g_t}\left[ \left\| \frac{1}{t} \sum_i \rho^n(g_i) \right\|_{\op} =
1\right] \geq 1 - O\left(\frac{1}{\sqrt[4]{n}}\right) \, ,
\]
where the $g_i$ are elements of $G$ chosen uniformly and independently
at random. See Theorem~\ref{thm:main} for a more precise statement.
\end{itemize}

In fact, these estimates establish that there are infinite families of
groups for which there is a representation $\rho$ such that 
$\Omega(\log \log |G|)$ uniformly random elements are necessary to bound the 
norm of $\rho$ as in~\eqref{eq:average-norm}, for any constant $\delta > 0$.  
Recall that the Alon-Roichman theorem guarantees that
$O(\log |G|)$ random elements suffice with high probability to bound
the norm of \emph{every} irrep $\rho$, and thus turn $G$ into an
expander. Closing this gap remains an interesting open question.

These negative results for independent and uniformly random group elements
suggest the following question, which is existential rather than probabilistic: 
Are there constants $t$ and $\delta > 0$ such that, for any group $G$ and any nontrivial irreducible representation $\rho$, 
there exist $t$ elements $g_1, \ldots, g_t \in G$ such that
\[
\left\| \frac{1}{t} \sum_i \rho(g_i)\right\|_{\rm op} \leq 1-\delta\,?
\]
Our construction cannot rule out this possibility.

\subsection*{Notation and actions on the group algebra} 
Given a finite group $G$, let $\wG$ denote its set of irreducible
representations. 
We assume throughout that all representations are unitary.  
For a representation $\rho$, we let $\chi_\rho(g) =
\tr \rho(g)$ denote its character, and let $d_\rho = \chi_\rho(1)$
denote its dimension. For two class functions, $\chi$ and $\psi$ on $G$,
we define 
\begin{equation}\label{eq:ip}
\langle \chi, \psi\rangle_G = \frac{1}{|G|} \sum_g \chi(g)\psi(g)^*
\end{equation}
and remark that the characters of the irreducible representations of
$G$ form an orthonormal basis for the space of class functions with
respect to the inner product~\eqref{eq:ip}.  Thus if $\rho$ is irreducible and $\sigma$ is a representation, 
$\langle \rho , \sigma \rangle_G$ is the number of copies of $\rho$ appearing in $\sigma$.

If $H$ is a subgroup of $G$ and $\chi$ is the character of a
representation of $G$, we let $\Res_H \chi$ denote this class function
restricted to the subgroup $H$; when we wish to emphasize the ambient
group $G$, we write $\Res^G_H \chi$. If $G=K^n$ and $\rho$ is a representation of $K$,
we let $\rho^n$ denote the representation of $G$ given by the rule
\[
\rho^n(g_1, \ldots, g_n) = \rho(g_1) \otimes \cdots \otimes \rho(g_n)
\]
and remark that $\chi_{\rho^n}(g_1, \ldots, g_n) = \prod_{j=1}^n
\chi_\rho(g_j)$. We overload this notation, defining 
\[
\chi_{\rho^n}(g)
= \Res_K \chi_{\rho^n} (g, \ldots, g)
\]
to be the character obtained on $K$ by
restricting $\chi_{\rho^n}$ to the ``diagonal'' subgroup 
$\{ (g, \ldots, g) \mid g \in K\} \cong K$.

Let $\C[G]$ denote the group algebra on $G$. The left action of $G$ on
$\C[G]$ obtained by linearly extending the rule $g: g' \mapsto g g'$
induces the \emph{(left) regular representation} $R$ of $G$, with character
$$
\chi_R(g) = \begin{cases} |G| & \text{if $g = 1$} \, ,\\
  0 & \text{otherwise.} \end{cases}
$$
As a consequence of character orthogonality, one can express $R$ as the sum
\[
R = \bigoplus_{\rho \in \wG} d_\rho \rho \, , 
\]
i.e., it contains $d_\rho$ copies of each irreducible representation $\rho$.  Thus its character can be written
$$
\chi_R = \sum_{\rho \in \wG} d_\rho \chi_\rho  \, . 
$$
The algebra $\C[G]$ can likewise be given the structure of a $G \times G$ representation $B$ by linearly extending the rule $(g_1, g_2): g \mapsto g_1 g g_2^{-1}$.  
Its character $\chi_B$ thus satisfies
$$
\chi_B(g_1, g_2) = \abs{ \left\{ g \,:\, g_1 g g_2^{-1} = g \right\} }  \, .
$$
Again applying character orthogonality, $B$ can be expressed as the sum 
\[
B = \bigoplus_{\rho \in \wG} \rho \otimes \rho^*
\]
and thus
$$
\chi_B(g_1, g_2) = \sum_{\rho \in \wG} \chi_\rho(g_1) \cdot \chi_\rho^*(g_2)  \, .
$$

Finally, the Plancherel measure $\planch$ on $\wG$ assigns each representation $\rho$ the probability mass
\[
\planch(\rho) = \frac{d_\rho^2}{|G|}  \, . 
\]
If $G=K^n$, then selecting $\rho = \rho_1 \otimes \cdots \otimes \rho_n$ from the Plancherel measure on $\wG$ is the same as selecting the $\rho_i$ independently from the Plancherel measure on $\wK$.

\section{Remarks on the subgroup structure of groups of the form $K^n$}
Anticipating the proofs, we collect a few facts about subgroups of $G=K^n$.  
Given $g \in G$, let $g_i \in K$ denote its $i$th coordinate; note that $\pi_i(g)=g_i$ is a homomorphism from $G$ to $K$.
We consider the subgroup $H$ generated by a collection of $t$ elements $h^{(1)}, \ldots, h^{(t)}$ of $K^n$. 
Given such a collection, 
define the function $\fS: \{ 1, \ldots, n\} \rightarrow K^t$ so that $S(i)$ lists their $i$th coordinates:
\begin{equation}
\label{eq:def-s}
\fS(i) = ( h^{(1)}_i, \ldots, h^{(t)}_i) \, . 
\end{equation}
Let $\sim_{\fS}$ denote the equivalence class on $\{1,\ldots,n\}$ given by the
level sets of $\fS$, so that 
\[ 
i \sim_{\fS} j \Leftrightarrow \fS(i) = \fS(j) \, . 
\] 
That is, $i \sim_{\fS} j$ if and only if $h^{(m)}_i = h^{(m)}_j$ for all $1 \le m \le t$.  
As a consequence, we also have $h_i = h_j$ for any $h \in H$.  
Thus if we define the subgroup of elements of $K^n$ that respect this equivalence, 
\begin{equation}\label{eq:def-H-tilde}
\tilde{H} = \{ (g_1, \ldots, g_n) \mid i \sim_{\fS} j \Rightarrow g_i = g_j\} \subseteq K^n \, ,
\end{equation}
we have $h^{(m)} \in \tilde{H}$ for all $m$, and hence $H \subset \tilde{H}$.  
(Note that $S$, $\sim_S$, and $\tilde{H}$ depend implicitly on the collection $h^{(1)}, \ldots, h^{(t)}$.)

While the group $H$ may have quite complicated structure, $\tilde{H}$ is isomorphic
to $K^\ell$ for some $\ell \geq 1$, where $\ell \le n$ is the number of equivalence classes of $\sim_{\fS}$.  
Moreover, as the function $\fS$ takes no more than $|K|^t$
different values, $\ell \leq |K|^t$ and  
$|H| \leq |\tilde{H}| \leq |K|^{|K|^t}$.  
Thus, as recorded in the following lemma, if $|K|$ and $t$ are constant then $H$ is of constant size.
\begin{lemma}
  \label{lem:subgroup-size}
  Let $H$ be the subgroup of $K^n$ generated by $t$ elements $h^{(1)}, \ldots, h^{(t)}$. Then $|H| \leq |K|^{|K|^t}$.
\end{lemma}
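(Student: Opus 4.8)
The plan is to bound $|H|$ not by understanding its internal structure but by exhibiting an explicit, easily-counted overgroup. First I would introduce the ``coordinate pattern'' map $\fS\colon\{1,\dots,n\}\to K^t$ of~\eqref{eq:def-s}, recording for each coordinate $i$ the $t$-tuple $\fS(i)=(h^{(1)}_i,\dots,h^{(t)}_i)$ of $i$th coordinates of the generators. Since $\fS$ takes values in $K^t$, it has at most $|K|^t$ distinct values, so its level-set partition $\sim_\fS$ has some number $\ell\le|K|^t$ of blocks.

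Next I would show that every element of $H$ is constant on the blocks of $\sim_\fS$, i.e.\ that $H\subseteq\tilde H$ with $\tilde H$ as in~\eqref{eq:def-H-tilde}. The key point is that, for a fixed pair $i,j$, the condition ``$g_i=g_j$'' on an element $g\in K^n$ reads $\pi_i(g)=\pi_j(g)$, where $\pi_i,\pi_j\colon K^n\to K$ are homomorphisms; hence the set of such $g$ is a subgroup of $K^n$. When $i\sim_\fS j$ this subgroup contains every generator $h^{(m)}$ by the very definition of $\fS$, and therefore contains $H$. Intersecting over all pairs $i\sim_\fS j$ yields $H\subseteq\tilde H$.

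Finally I would count $\tilde H$: an element of $\tilde H$ is determined by, and may be prescribed arbitrarily as, a single element of $K$ for each block of $\sim_\fS$, so $\tilde H\cong K^\ell$ and $|H|\le|\tilde H|=|K|^\ell\le|K|^{|K|^t}$, which is the assertion. Every step here is elementary, so I do not anticipate a genuine obstacle; the only two observations requiring any care are that the block-respecting condition is a subgroup condition (so that verifying it on the generators suffices) and the crude bound $\ell\le|K|^t$ on the number of blocks, both of which are immediate.
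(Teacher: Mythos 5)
Your proposal is correct and follows essentially the same route as the paper: the paper also bounds $|H|$ by the overgroup $\tilde H$ of~\eqref{eq:def-H-tilde}, noting $\tilde H\cong K^\ell$ with $\ell\le|K|^t$ blocks of $\sim_\fS$. Your only addition is to make explicit (via the equalizer of the coordinate homomorphisms $\pi_i,\pi_j$) why checking the block-respecting condition on the generators suffices, a point the paper leaves implicit.
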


The size of the smallest equivalence class of $\sim_{\fS}$ plays an
essential role in both proofs. To name this quantity, for a sequence of elements $h^{(1)}, \ldots, h^{(t)}$, let
$$
d = d(h^{(1)}, \ldots, h^{(t)}) = \min_{i \in \{1, \ldots, n\}}
\bigl|\{ j \mid i \sim_{\fS} j\}\bigr| \, .
$$
We set down two straightforward tail bounds on
$d(h^{(1)},\ldots,h^{(t)})$, when the $h^{(m)}$ are selected
independently and uniformly at random from $K^n$.
\begin{lemma}
\label{lem:support}
Let $h^{(1)}, \ldots, h^{(t)}$ be $t$ independent and uniformly random elements of
$K^n$. Then
\begin{enumerate}
\item\label{eq:support-large}
  $\displaystyle\Pr\left[ d(h^{(1)}, \ldots, h^{(t)}) \leq \frac{n}{2 |K|^t}
  \right] \leq \frac{4 |K|^{2t}}{n}$
and, 
\item\label{eq:support-small} for any $\ell \geq 1$, 
$\displaystyle
  \Pr\left[ d(h^{(1)}, \ldots, h^{(t)}) < \ell \right] 
  \leq n^\ell |K|^t \eee^{-n/|K|^t} \, .$
\end{enumerate}
\end{lemma}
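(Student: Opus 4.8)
The plan is to understand $d(h^{(1)},\ldots,h^{(t)})$ as the minimum occupancy among $\ell \le |K|^t$ "bins" (the level sets of $\fS$), where each of the $n$ coordinates is thrown independently into a bin. Indeed, the coordinates $\fS(1),\ldots,\fS(n)$ are i.i.d.\ uniform elements of $K^t$ (since each $h^{(m)}$ is uniform on $K^n$, hence has i.i.d.\ uniform coordinates, and the $h^{(m)}$ are independent), so for a fixed element $v \in K^t$ the number of coordinates $i$ with $\fS(i)=v$ is $\mathrm{Binomial}(n,1/|K|^t)$, with mean $\mu := n/|K|^t$. The quantity $d$ is the size of the smallest nonempty level set; equivalently, since the level set containing any given $i$ has the distribution of $1 + \mathrm{Binomial}(n-1,1/|K|^t)$ and there are at most $|K|^t$ distinct level sets, both bounds follow from Chernoff-type estimates combined with a union bound.

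For part~\ref{eq:support-large}, I would fix an index $i$ and note that the size of its level set, $N_i := |\{j : j \sim_\fS i\}|$, satisfies $N_i = 1 + X$ where $X \sim \mathrm{Binomial}(n-1, 1/|K|^t)$, so $\Exp[N_i] \ge \mu$. By the second-moment / Chebyshev bound, $\Pr[N_i \le \mu/2] \le \Pr[|N_i - \Exp N_i| \ge \mu/2] \le \Var(N_i)/(\mu/2)^2 \le 4/\mu = 4|K|^t/n$; this uses $\Var(N_i) = (n-1)\tfrac{1}{|K|^t}(1-\tfrac{1}{|K|^t}) \le \mu$. Taking a union bound over the at most $|K|^t$ distinct level sets — one representative $i$ per class — gives $\Pr[d \le \mu/2] \le |K|^t \cdot 4|K|^t/n = 4|K|^{2t}/n$, which is exactly the claimed bound. (One could equally well union-bound over all $n$ indices $i$, but that wastes a factor of $n/|K|^t$; grouping by class is the efficient move.)

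For part~\ref{eq:support-small}, I would instead directly bound $\Pr[d < \ell]$ via a union bound over which $\ell-1$ coordinates could form a small class. If $d < \ell$ then some level set has size at most $\ell - 1$; pick one such class and let $i$ be its smallest element, so the full set of indices $j$ with $\fS(j) = \fS(i)$ has size $\le \ell-1$. Fix a candidate set $T \subseteq \{1,\ldots,n\}$ with $i \in T$, $|T| \le \ell-1$ (at most $\binom{n}{\ell-1} \le n^{\ell-1} \le n^\ell$ choices, or cruder still just $n^\ell$), and a candidate value $v = \fS(i) \in K^t$ (at most $|K|^t$ choices); the event that every coordinate outside $T$ differs from $v$ has probability $(1 - 1/|K|^t)^{n - |T|} \le (1-1/|K|^t)^{n-\ell+1} \le \eee^{-(n-\ell+1)/|K|^t} \le \eee^{-n/|K|^t}\cdot\eee^{(\ell-1)/|K|^t}$. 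Folding the harmless factor into the counting, the union bound yields $\Pr[d < \ell] \le n^\ell |K|^t \eee^{-n/|K|^t}$, as claimed.

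The only mildly delicate point — and the place to be careful rather than the place that is hard — is the bookkeeping in the union bounds: making sure one union-bounds over \emph{classes} (equivalently, over representatives or over candidate index-sets) rather than naively over all configurations, so that the constants come out as stated, and checking that the slack from "$1+\mathrm{Binomial}$" versus "$\mathrm{Binomial}$" and from $(\ell-1)$ versus $\ell$ in the exponents is absorbed by the stated (deliberately loose) bounds. No genuine obstacle arises: everything reduces to a balls-in-bins computation with a constant number of bins.
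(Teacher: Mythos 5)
Your proof is correct and takes essentially the same approach as the paper: Chebyshev's inequality applied to the $\mathrm{Binomial}(n,1/|K|^t)$ level-set sizes plus a union bound over the at most $|K|^t$ bins, with the one cosmetic caveat that the union bound should formally be indexed by the values $v \in K^t$ (exactly your opening observation) rather than by the random ``representatives per class.'' Your part~\ref{eq:support-small} union bound over candidate small index sets $T$ and values $v$ is the same counting as the paper's direct estimate $\Pr[\mathrm{Bin}(n,1/|K|^t) < \ell] \le \bigl[\sum_{i<\ell}\binom{n}{i}\bigr]\,\eee^{-n/|K|^t} \le n^\ell \eee^{-n/|K|^t}$, just organized differently, and the slack you note is indeed absorbed.
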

\begin{proof}
  For a given $\vec{s} \in K^t$, let $X_{\vec{s}}$ be the random variable equal to the size of the corresponding set in the partition, $\{ i : \vec{S}(i) = \vec{s} \}$.  Since $X_{\vec{s}}$ is binomially distributed as $\textrm{Bin}(n,1/|K|^t)$, we have $\Exp X_{\vec{s}} = n/{|K|^t}$ and 
  $\Var X_{\vec{s}} \le n/|K|^t$.  By Chebyshev's inequality, 
  $$
  \Pr\left[ \left| X_{\vec{s}} - \frac{n}{|K|^t}\right| 
  \geq \frac{n}{2 |K|^t} \right] \leq \frac{4|K|^t}{n}  \, .
  $$
Since there are $|K|^t$ elements of $K^t$, the union bound implies the statement~\eqref{eq:support-large} of the lemma. As for statement~\eqref{eq:support-small}, for any $\vec{s}$ the probability that $X_{\vec{s}} < \ell$ is no more than
\[
\sum_{i = 0}^{\ell-1} \binom{n}{i} \left(\frac{1}{|K|^t}\right)^i \left(1 - \frac{1}{|K|^t}\right)^{n-i} 
\leq \sum_{i = 0}^{\ell-1} \binom{n}{i} \left(1 - \frac{1}{|K|^t}\right)^{n} 
\leq \left[\sum_{i=0}^{\ell-1} \binom{n}{i}\right] \left(\eee^{-{1}/{|K|^t}}\right)^n 
\;\leq  n^{\ell} \eee^{-{n}/{|K|^t}} \, .
\]
Then the union bound implies the statement~\eqref{eq:support-small} of
the lemma.
\end{proof}
\noindent
Note that the random variable $X_{\vec{s}}$ obeys the Chernoff bound,
making it much more concentrated than the second moment calculation
of~\eqref{eq:support-large} suggests, but this is not important to our results.

\section{An explicit construction}

\begin{theorem}\label{thm:main-explicit}
Let $G=K^n$ where $K$ is a finite nonabelian group with trivial center.  Let $\rho$ a faithful irreducible representation of $K$ of dimension $d_\rho \geq 2$. Then there is an integer $\kappa_\rho \geq 2$ such that for every $t > 0$,
  \[
  \Pr_{h^{(1)}, \ldots, h^{(t)}} \left[ \left\|\frac{1}{t} \sum_{m=1}^t
      \rho^n(h^{(m)})\right\|_{\op} = 1\right] \geq 1 - n^{\kappa_\rho} |K|^t \exp(-n/|K|^t)  \, ,
  \]
where $h^{(1)}, \ldots, h^{(t)} \in G$ are independent and uniform.
\end{theorem}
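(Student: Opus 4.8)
The plan is to show that, except with the stated probability, the subgroup $H = \langle h^{(1)}, \ldots, h^{(t)}\rangle$ has the property that $\Res_H \rho^n$ contains the trivial representation of $H$; equivalently, there is a unit vector $v$ with $\rho^n(h^{(m)}) v = v$ for every $m$. Since each $\rho^n(h^{(m)})$ is unitary, the average $\frac1t\sum_m \rho^n(h^{(m)})$ has operator norm at most $1$; but it fixes $v$, so its norm is exactly $1$. Thus it suffices to control when $\Res_H \rho^n$ has a nonzero invariant vector.

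The first ingredient is the integer $\kappa_\rho$. Because $\rho$ is faithful and $K$ is centerless, $\rho(g)$ is a scalar matrix only for $g = 1$: a scalar $\rho(g)$ commutes with the image of $\rho$, so $g$ is central by faithfulness. Since $\chi_\rho(g)$ is a sum of $d_\rho$ unit-modulus eigenvalues, $|\chi_\rho(g)| = d_\rho$ would force $\rho(g)$ to be scalar; hence $\beta := \max_{g \neq 1}|\chi_\rho(g)|/d_\rho < 1$. A one-line character computation then gives, for every $c$,
\[
\langle \one, \rho^{\otimes c}\rangle_K = \frac{1}{|K|}\sum_{g\in K}\chi_\rho(g)^c \ge \frac{d_\rho^c}{|K|}\bigl(1 - (|K|-1)\beta^c\bigr) ,
\]
and the right-hand side is positive — hence, being a multiplicity, at least $1$ — for every $c$ exceeding $\log(|K|-1)/\log(1/\beta)$. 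I set $\kappa_\rho$ to be an integer that is at least $2$ and larger than this threshold, so that $\rho^{\otimes c}$ contains the trivial representation of $K$ for all $c \ge \kappa_\rho$.

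Next I bring in the subgroup structure of Section~2. Let $C_1, \ldots, C_\ell$ be the level sets of the map $\fS$ associated with $h^{(1)}, \ldots, h^{(t)}$, so that $\tilde H \cong K^\ell$, and observe that under this identification $\Res_{\tilde H}\rho^n$ is, up to a fixed permutation of tensor factors, the representation of $K^\ell$ in which the $j$th copy of $K$ acts through $\rho^{\otimes |C_j|}$. If every block has size $|C_j| \ge \kappa_\rho$, then each $\rho^{\otimes |C_j|}$ has a nonzero $K$-invariant vector $v_j$, and $v_1 \otimes \cdots \otimes v_\ell$ is invariant under all of $\tilde H \supseteq H$. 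Since $\min_j |C_j| = d(h^{(1)}, \ldots, h^{(t)})$, the event $d(h^{(1)}, \ldots, h^{(t)}) \ge \kappa_\rho$ suffices for the desired conclusion, and Lemma~\ref{lem:support}\eqref{eq:support-small} with $\ell = \kappa_\rho$ bounds its failure probability by $n^{\kappa_\rho}|K|^t e^{-n/|K|^t}$.

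The only genuine content is the character estimate — that sufficiently high tensor powers of a faithful irreducible representation of a centerless group contain the trivial representation — together with the bookkeeping that makes $\kappa_\rho$ work uniformly for all $c \ge \kappa_\rho$ rather than only at $c = \kappa_\rho$. The decomposition of $\Res_{\tilde H}\rho^n$ into a tensor product and the passage from an invariant vector to operator norm $1$ are routine.
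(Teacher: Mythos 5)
Your proposal is correct and follows essentially the same route as the paper: the same quantitative version of Lemma~\ref{lem:faithful} (using $|\chi_\rho(g)| < d_\rho$ for $g \neq 1$ to get a threshold $\kappa_\rho$ beyond which $\rho^{\otimes c}$ contains the trivial representation), the same tensor decomposition of $\Res_{\tilde H}\rho^n$ over the level sets of $\fS$, and the same application of Lemma~\ref{lem:support}, part~\eqref{eq:support-small}, with $\ell = \kappa_\rho$. The only cosmetic difference is that you exhibit the invariant vector $v_1 \otimes \cdots \otimes v_\ell$ explicitly where the paper merely asserts that the restriction contains a copy of the trivial representation.
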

\noindent


We begin by observing that a representation satisfying the conditions
of the theorem above has the property that 
\[
\rho^{\otimes k} = \underbrace{\rho \otimes \cdots \otimes
  \rho}_{\textrm{$k$ times}}
\]
contains a copy of the trivial representation for all sufficiently large $k$.  
For instance, if $K=S_3$ and $\rho$ is its two-dimensional representation, then this holds for any $k \ge 2$.  
The following is a classic fact of representation theory, but we give a proof for completeness:
\begin{lemma}
\label{lem:faithful}
  Let $K$ be a finite group with trivial center and $\rho$ a faithful
  irreducible representation of $K$ of dimension $d_\rho \geq 2$. Then there is an integer $\kappa_\rho \geq 2$ so
  that for all $k \geq \kappa_\rho$, $\langle \chi_\rho^{k}, 1\rangle_K > 0$.
\end{lemma}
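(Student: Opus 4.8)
The plan is to use the inner product $\langle \chi_\rho^k, 1\rangle_K = \frac{1}{|K|}\sum_{g\in K}\chi_\rho(g)^k$ (real since $\chi_\rho^k$ is a product of characters and we may pair $g$ with $g^{-1}$; more simply, work with $|\chi_\rho(g)|$ bounds) and argue that for large $k$ the $g=1$ term dominates. Concretely, $\langle\chi_\rho^k,1\rangle_K = \frac{1}{|K|}\bigl(d_\rho^k + \sum_{g\neq 1}\chi_\rho(g)^k\bigr)$, so it suffices to show $\bigl|\sum_{g\neq 1}\chi_\rho(g)^k\bigr| < d_\rho^k$ for all large $k$. The natural route is to bound each off-identity term: $|\chi_\rho(g)|\le d_\rho$ always, with equality iff $\rho(g)$ is a scalar matrix; since $\rho$ is faithful with trivial center, $\rho(g)$ scalar forces $g$ to be central in $K$, hence $g=1$. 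Therefore for every $g\neq 1$ we have $|\chi_\rho(g)| < d_\rho$ strictly, and since $K$ is finite there is a uniform bound $\max_{g\neq 1}|\chi_\rho(g)| = (1-\eta)d_\rho$ for some $\eta>0$.

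First I would record the scalar-matrix characterization: if $|\tr U| = d$ for a $d\times d$ unitary $U$, then (by the equality case of the triangle inequality applied to the eigenvalues, all of modulus $1$) all eigenvalues of $U$ coincide, so $U = \lambda I$. Apply this with $U=\rho(g)$: then $\rho(g)$ commutes with all of $\rho(K)$, and since $\rho$ is injective and $\mathrm{Im}\,\rho$ spans (by irreducibility, or just because $\rho$ is faithful and we can check $g$ against generators), $g\in Z(K)=\{1\}$. Next, using $|\chi_\rho(g)|\le(1-\eta)d_\rho$ for $g\neq1$ and $|K|-1$ such terms,
\[
\Bigl|\sum_{g\neq1}\chi_\rho(g)^k\Bigr| \le (|K|-1)\,(1-\eta)^k d_\rho^k < d_\rho^k
\]
once $(1-\eta)^k < 1/(|K|-1)$, i.e. for all $k$ exceeding some threshold $\kappa_\rho$; we may enlarge $\kappa_\rho$ so that $\kappa_\rho\ge 2$. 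For such $k$ the bracketed quantity $d_\rho^k + \sum_{g\neq1}\chi_\rho(g)^k$ is a complex number of modulus at least $d_\rho^k - (|K|-1)(1-\eta)^k d_\rho^k > 0$; but it is also a nonnegative integer (it equals $|K|\langle\chi_\rho^k,1\rangle_K$, the multiplicity of the trivial representation in $\rho^{\otimes k}$, up to the identification of $\chi_\rho^k$ with $\chi_{\rho^{\otimes k}}$), so it is at least $1$. Hence $\langle\chi_\rho^k,1\rangle_K > 0$.

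The only mild subtlety—hardly an obstacle—is making sure the displayed quantity is genuinely real/nonnegative before invoking "positive integer of modulus $\ge$ something is $\ge 1$"; this is immediate because $\langle\chi_{\rho^{\otimes k}},1\rangle_K$ is the dimension of the space of $K$-invariants in $\rho^{\otimes k}$, a nonnegative integer. An alternative that sidesteps even this is to note $\chi_\rho(g^{-1})=\overline{\chi_\rho(g)}$, so the sum over $g$ of $\chi_\rho(g)^k$ is conjugate-symmetric and thus real; combined with the magnitude bound it is real and strictly positive. Either way the argument is short, and the hypothesis $d_\rho\ge 2$ is used only to guarantee $\rho$ is nontrivial so that the trivial-representation count starts from $0$ and the statement is nonvacuous (and to ensure $\kappa_\rho\ge 2$ is a meaningful claim rather than automatically true).
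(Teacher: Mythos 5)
Your proposal is correct and follows essentially the same route as the paper: use faithfulness plus trivial center to get the strict bound $|\chi_\rho(g)| < d_\rho$ for all $g \neq 1$, and then observe that for $k$ large the identity term $d_\rho^k$ dominates the sum, making $\langle \chi_\rho^k, 1\rangle_K$ positive. Your explicit handling of realness/nonnegativity (via the invariant-subspace interpretation or conjugate symmetry) is a small point the paper leaves implicit, but the argument is the same.
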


\begin{proof}
  As $\rho$ is faithful, 
  the only element $h \in K$ such that $\rho(k)$ is a scalar matrix is the identity:
  \[
  \{ h \mid \text{$\rho(h) = \lambda\one$ for some $\lambda \in \C$}\}
  = \mathcal{Z}(K) = \{1\} \, ,
  \]
  where $\mathcal{Z}(K)$ denotes the (trivial) center of $K$.
  By unitarity, it follows that $| \chi_\rho(h) | < d_\rho$ for all $h \neq 1$.
  Expanding
  \[
  \langle \chi_\rho^k, 1 \rangle_K = \frac{1}{|K|} \sum_{h \in K}
  \chi_\rho(h)^k = \frac{d_\rho^k}{|K|}\Biggl[1 + \underbrace{\sum_{h \neq 1}
    \left(\frac{\chi_\rho(h)}{d_\rho}\right)^k}_{(\dagger)}\Biggr] \, ,
  \]
  it is evident that, for sufficiently large $k$, each term of the sum
  $(\dagger)$ is strictly less than $1/(|K|-1)$ in absolute value. For
  such $k$, the quantity in brackets above is strictly positive, as desired.
\end{proof}

\begin{proof}[Proof of Theorem~\ref{thm:main-explicit}]
In light of Lemma~\ref{lem:faithful}, let $K$ be a finite group with trivial
center and $\rho$ a faithful, irreducible representation of $K$ of
dimension $d_\rho > 2$. Consider now the representation $\rho^n$ of the group
$K^n$. For a sequence of elements
$h^{(1)}, \ldots, h^{(t)} \in K^n$, recall that the subgroup
$\tilde{H}$, defined in~\eqref{eq:def-H-tilde} above, contains all
$h^{(m)}$ and that
\[
\Res_{\tilde{H}} \rho^n 
= \bigotimes_{\substack{\text{$C$: equivalence}\\\text{class of $\sim_{\fS}$}}} 
\Res_K^{K^{|C|}} \rho^{|C|} \, .
\]
If each equivalence class $C$ of $\sim_{\fS}$ has size at
least $\kappa_\rho$, each factor in the tensor product above has a
copy of the trivial representation, and thus 
$\Res_{\tilde{H}} \rho^n$ has a copy of the trivial representation.  In that case,
\[
\Bigl\| \frac{1}{t} \sum_{m=1}^t \rho^n(h^{(m)}) \Bigr\|_{\op} = 1 \, .
\]
As each equivalence class of $\sim_{\fS}$ has size at least
$d(h^{(1)}, \ldots, h^{(t)})$, we conclude that
\[
\Pr\left[ \left\| \frac{1}{t} \sum_{m=1}^t \rho^n(h^{(m)}) \right\|_{\op} =1\right] 
\geq 1 - \Pr\left[ d(h^{(1)}, \ldots, h^{(t)}) < \kappa_\rho\right] 
\geq 1 - n^{\kappa_\rho} |K|^t \eee^{-n/|K|^t} \, ,
\]
by statement 2 of Lemma~\ref{lem:support}.
\end{proof}

\section{The behavior of random representations}

Focusing on the same family of groups $G=K^n$ where $K$ is nonabelian with a trivial center, we establish that a representation $\rho$ selected according to the Plancherel measure has the property, with high probability, that
\[
\Pr_{h^{(1)}, \ldots, h^{(t)}}\left[ \left\| \frac{1}{t} \sum_{m=1}^t \rho^n(h^{(m)}) \right\|_{\op} = 1\right] 
\geq 1 - O\left(\frac{1}{\sqrt[4]{n}}\right) \, ,
\]
where $h^{(1)}, \ldots, h^{(t)} \in G$ are independent and uniform.

Our proof focuses on the random variable 
\begin{equation}
\label{eq:xh}
X_H = \frac{\langle \Res_H \chi_\rho, 1\rangle_H}{d_\rho} \, ,
\end{equation}
where $H$ is a fixed subgroup of a group $G$ and $\rho$ is selected
according to the Plancherel measure.  Since $X_H$ is
the dimensionwise fraction of $\rho$ that restricts to the trivial
representation under $H$, whenever $X_H > 0$ we have 
\[
\opnorm{ \Exp_{h \in H} \rho(h) } = 1  \, . 
\]
The proof will show that for groups of the form $K^n$, if $H=\langle h^{(1)}, \ldots, h^{(t)} \rangle$ then $X_H > 0$ with high probability.  We do this by computing the first two moments of $X_H$, first for general group-subgroup pairs, and then specializing to the groups $K^n$.
 
\paragraph{The expectation}  For the first moment, observe that if $\rho \in \wG$ is distributed according to the Plancherel measure, then 
\begin{align*}
  \Exp_\rho X_H 
  &= \Exp_\rho \frac{\langle \Res_H \chi_\rho, 1\rangle_H}{d_\rho} 
  = \sum_\rho \frac{d_\rho^2}{|G|} \frac{\langle \Res_H \chi_\rho, 1\rangle_H}{d_\rho} 
  = \frac{1}{|G|} \sum_\rho d_\rho \langle \Res_H \chi_\rho, 1\rangle_H\\
&= \frac{1}{|G|} \left\langle \Res_H \sum_\rho  d_\rho \chi_\rho, 1\right\rangle_H 
= \frac{1}{|G|} \langle \Res_H \chi_R, 1\rangle_H 
= \frac{1}{|H|}  \, . 
\end{align*}

\paragraph{The second moment}
For the second moment, observe that
\begin{align*}
\Exp_\rho X_H^2 
&= \sum_{\rho} \frac{d_\rho^2}{|G|} \frac{\langle \Res_H \chi_\rho, 1\rangle_H^2}{d_\rho^2} 
= \frac{1}{|G|}  \sum_{\rho} \langle \Res_{H \times H} \chi_{\rho \otimes \rho^*} , 1\rangle_{H \times H}\\
&= \frac{1}{|G|}  \left \langle \Res_{H \times H} \chi_B , 1\right \rangle_{H \times
H} = \frac{1}{|G|} \frac{1}{|H|^2} \sum_{h_1, h_2 \in H} |\{ g\,:\,
h_1^{-1} g h_2 = g\}| \, .
\end{align*}
Thus
\begin{align*}
\Var_\rho[X_H] &= \Exp_\rho X_H^2 - \frac{1}{|H|^2} 
= \frac{1}{|H|^2} \left( \sum_{(h_1, h_2) \in H \times H} \Pr_{g}[h_1 = g^{-1} h_2 g] \;-\; 1 \right) \\
&= \frac{1}{|H|^2} \sum_{\substack{(h_1, h_2) \in H
    \times H\\(h_1, h_2) \neq (1,1)}} \Pr_{g}[h_1 = g^{-1} h_2 g] 
= \frac{1}{|H|^2} \sum_{\substack{h \in H\\h \neq 1}} \frac{|h^G \cap H|}{|h^G|} 
\leq \frac{1}{|H|} \sum_{\substack{h \in H\\h \neq 1}} \frac{1}{|h^G|}  \, ,
\end{align*}
where $h^G = \{ g^{-1} h g : g \in G\}$ is the conjugacy class of $h$ in $G$.

Applying Chebyshev's inequality, we conclude that
\begin{equation}
\label{eq:general-restriction}
\Pr_\rho[ X_H = 0 ] 
\leq \Pr\left[ \Bigl|X_H - \frac{1}{|H|}\Bigr| \geq \frac{1}{|H|}\right] 
\leq |H| \sum_{\substack{h \in H\\h \neq 1}} \frac{1}{|h^G|} \, .
\end{equation}

Returning to the statement of the theorem, we will show the following.
\begin{theorem}
\label{thm:main}
Let $K$ be a finite nonabelian group with trivial center and let $G=K^n$.  Let $t$ be such that $2|K|^t \le \alpha \sqrt{n}$, let $h^{(1)}, \ldots, h^{(t)}$ be independent elements selected uniformly from $G$, and let $\rho = \rho_1 \otimes \cdots \otimes \rho_n$ be chosen according to the Plancherel measure on $\wG$.  Then 
  \[
  \Pr_{\rho,\{h^{(m)}\}}[ X_H =0] 
  \leq \frac{2 \alpha}{\sqrt{n}} + \left( 2^{-1/\alpha} |K|^{\alpha}\right)^{\sqrt{n}} \, ,
  \]
where $H$ is the subgroup generated by the elements $h^{(1)}, \ldots, h^{(t)}$ and $X_H$ is defined as in~\eqref{eq:xh}. When $4
|K|^t \sqrt{\log |K|} \leq \sqrt{n}$,
it
follows that with probability at least
\begin{equation}\label{eq:rep-mass}
1 - \frac{\sqrt{2}}{\sqrt[4]{n \log |K|}}
\end{equation}
a representation $\rho$ selected according to the Plancherel measure
has the property that
\begin{equation}\label{eq:rep-guarantee}
  \Pr_{\{h^{(m)}\}}\left[ \left\| \frac{1}{t} \sum_{m=1}^t \rho(h^{(m)}) \right\|_{\op} = 1\right] 
  \geq 1 - \frac{\sqrt{2}}{\sqrt[4]{n \log |K|}} \, .
  \end{equation}
\end{theorem}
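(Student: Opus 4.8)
The plan is to prove the probability bound on $\Pr_{\rho,\{h^{(m)}\}}[X_H=0]$ first, and then read off the high-probability statement~\eqref{eq:rep-guarantee} by a routine averaging argument. For that second step: by Fubini, $\Exp_\rho\bigl[\Pr_{\{h^{(m)}\}}[X_H=0]\bigr]=\Pr_{\rho,\{h^{(m)}\}}[X_H=0]$, so if this joint probability is at most $q$, Markov's inequality gives $\Pr_\rho\bigl[\Pr_{\{h^{(m)}\}}[X_H=0]\ge\sqrt q\bigr]\le\sqrt q$; and whenever $X_H>0$ there is a unit vector fixed by all of $H$, hence by every $h^{(m)}$, on which $\frac1t\sum_m\rho(h^{(m)})$ acts as the identity, so $\opnorm{\frac1t\sum_m\rho(h^{(m)})}=1$. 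Choosing $\alpha=1/(2\sqrt{\log|K|})$ converts the hypothesis $2|K|^t\le\alpha\sqrt n$ into $4|K|^t\sqrt{\log|K|}\le\sqrt n$, makes the exponential term $(2^{-1/\alpha}|K|^\alpha)^{\sqrt n}$ negligible next to the polynomial term (since $\alpha^2\log|K|=\tfrac14<\log 2$ forces $2^{-1/\alpha}|K|^\alpha<1$), and yields $\sqrt q\le\sqrt 2/\sqrt[4]{n\log|K|}$, which is exactly~\eqref{eq:rep-mass}.

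For the probability bound itself, condition on $h^{(1)},\dots,h^{(t)}$ and hence on $H=\langle h^{(1)},\dots,h^{(t)}\rangle$, and apply~\eqref{eq:general-restriction}, giving $\Pr_\rho[X_H=0\mid h^{(1)},\dots,h^{(t)}]\le|H|\sum_{h\in H\setminus\{1\}}|h^G|^{-1}$. The crucial structural input is that conjugacy classes in $G=K^n$ factor coordinatewise: $|h^G|=\prod_{i=1}^n|h_i^K|$ for $h=(h_1,\dots,h_n)$. Because $H\subseteq\tilde H$, every $h\in H$ is constant on each equivalence class of $\sim_{\fS}$, and because $K$ has trivial center, $|h_i^K|\ge 2$ whenever $h_i\ne 1$; so for $h\ne 1$ there is a full equivalence class $C$, of size at least $d=d(h^{(1)},\dots,h^{(t)})$, on which $h$ is non-identity, whence $|h^G|\ge 2^{|C|}\ge 2^d$. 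Together with the bound $|H|\le|\tilde H|=|K|^{\ell}\le|K|^{|K|^t}$ of Lemma~\ref{lem:subgroup-size}, this collapses the conditional estimate to
\[
\Pr_\rho\bigl[X_H=0\mid h^{(1)},\dots,h^{(t)}\bigr]\;\le\;\frac{|H|^2}{2^{d}}\;\le\;\frac{|K|^{2|K|^t}}{2^{d}}\,.
\]

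It remains to average this over the $h^{(m)}$, by conditioning on the size of the smallest equivalence class. On the event $\{d>n/(2|K|^t)\}$, the hypothesis $2|K|^t\le\alpha\sqrt n$ gives simultaneously $|K|^{2|K|^t}\le|K|^{\alpha\sqrt n}$ and $2^{-d}<2^{-\sqrt n/\alpha}$, so the conditional probability above is at most $(|K|^{\alpha}2^{-1/\alpha})^{\sqrt n}$ — this produces the exponential term. On the complementary event $\{d\le n/(2|K|^t)\}$ I would bound the conditional probability trivially by $1$ and control the probability of that event by the tail bound of Lemma~\ref{lem:support}; combined with $2|K|^t\le\alpha\sqrt n$ this produces the polynomial term, and adding the two contributions finishes the proof. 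I expect this last estimate — extracting the stated $2\alpha/\sqrt n$ from the bare second-moment bound $4|K|^{2t}/n$ of Lemma~\ref{lem:support} under the constraint $2|K|^t\le\alpha\sqrt n$ (inserting the hypothesis into the per-class Chebyshev bound, using that the deviation $n/|K|^t-n/(2|K|^t)=\tfrac12\mu\ge\sqrt n/\alpha$ absorbs a denominator factor, before the union over the $\le|K|^t$ classes) — to be the fussiest piece of bookkeeping; everything else, namely the coordinatewise factorization of conjugacy classes and the fixed-vector argument, is structural and short. One should also be careful that it is precisely the factor $2$ in $2|K|^t\le\alpha\sqrt n$ that keeps $|K|^{2|K|^t}$ below $|K|^{\alpha\sqrt n}$ rather than $|K|^{2\alpha\sqrt n}$ in the exponential branch.
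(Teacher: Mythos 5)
Your proposal follows the paper's proof essentially step for step: the same conditioning on $H$ and use of the second-moment bound~\eqref{eq:general-restriction}, the same split on the event $d \ge n/(2|K|^t)$ via Lemma~\ref{lem:support}, the same estimates $|h^G| \ge 2^{\supp(h)} \ge 2^d$ (from the trivial center) and $|H| \le |K|^{|K|^t}$ (Lemma~\ref{lem:subgroup-size}) giving the exponential term $(2^{-1/\alpha}|K|^\alpha)^{\sqrt n}$, and the same choice $\alpha = 1/(2\sqrt{\log |K|})$ together with the fixed-vector observation and a Markov/Fubini step (which the paper leaves implicit) for \eqref{eq:rep-mass}--\eqref{eq:rep-guarantee}. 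One remark on the step you rightly flag as fussiest: as sketched, inserting $\mu/2 \ge \sqrt n/\alpha$ per class and then taking the union bound over the $\le |K|^t$ classes yields $2\alpha|K|^t/\sqrt n$, not $2\alpha/\sqrt n$; but the paper's own text does no better, passing from $4|K|^{2t}/n$ in~\eqref{eq:pr-decomp} to the stated $2\alpha/\sqrt n$, which under the hypothesis $2|K|^t\le\alpha\sqrt n$ alone is not justified (it would need $2|K|^{2t}\le\alpha\sqrt n$). So you are in the same position as the authors here: for constant $t$ the discrepancy is immaterial since $4|K|^{2t}/n = O(1/n)$, and in the general parameter range the step is repaired by the Chernoff bound for $X_{\vec s}$ that the paper notes after Lemma~\ref{lem:support}.
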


Note that when $|K|^t\sqrt{\log |K|} = o(\sqrt{n})$ we may choose
$\alpha = o(1/\sqrt{\log |K|})$ in the theorem above, which guarantees that $\Pr[X_H = 0] = o(1)$.  Thus, in order to bring the expected norm of $\rho$ down to $1-\eps$ for any constant $\eps > 0$, we need $t = \Omega(\log n) = \Omega(\log \log |G|)$ random elements.  

For an element $\vec{g} = (g_1, \ldots, g_n) \in K^n$, let $\supp(\vec{g}) = |\{ i : g_i \neq 1\}|$ denote the size of the support of $\vec{g}$. We extend the definition to subgroups: for a subgroup $L < K^n$, define
$$
\supp(L) = \min_{\substack{\vec{g} \in L\\ \vec{g} \neq \vec{1}}} \supp(\vec{g}) \, . 
$$
An essential parameter of the proof below is $\supp(H)$, where $H$ is generated by the collection
$h^{(1)}, \ldots, h^{(t)}$. We may estimate this quantity by
observing that
$\supp(H) \geq \supp(\tilde{H})$ and that $\supp(\tilde{H})$ is the size of the smallest equivalence class of $\sim_{\fS}$.
Then we have the bound
\[
\supp(H) \geq d(h^{(1)}, \ldots, h^{(t)}) \, .
\]

We finally return to the proof of Theorem~\ref{thm:main}.
\begin{proof}[Proof of Theorem~\ref{thm:main}]
Let $h^{(1)}, \ldots, h^{(t)}$ be $t$ elements chosen independently and uniformly at random from $K^n$, and let $H$ denote the subgroup they generate. Additionally, let $\rho = \rho_1 \otimes \cdots \otimes \rho_n$ be a representation of $K^n$ selected according to the Plancherel measure.  Then with $d = d(h^{(1)}, \ldots, h^{(t)})$ as above,
\begin{equation}
\label{eq:pr-decomp}
\begin{split}
   \Pr_{\rho, \{h^{(m)}\}}[ X_H =0] 
   &\leq  
   \Pr_{\{ h^{(m)}\}}\left[ d < \frac{n}{2 |K|^t} \right] 
 + \Pr_{\rho, \{h^{(m)}\}}\left[ X_H = 0 \;\;\vrule\;\; d \geq \frac{n}{2 |K|^t}\right] \\
    &\leq \frac{4 |K|^{2t}}{n}  + \max_{\substack{\text{$\{ h^{(m)}\}$ such that}\\ d \geq \frack{n}{2 |K|^t}}} 
\;\Pr_\rho \left[ X_H = 0\right] \, ,
 \end{split}
 \end{equation}
the second line following from Lemma~\ref{lem:support}.

As $K$ has trivial center, all nontrivial conjugacy classes of $K$
have size at
least two.  In particular, the centralizer $Z_g = \{ h \mid
g = h^{-1} g h \}$ is a proper subgroup of $K$.  It follows that for
an element $\vec{h} \ne 1$ of $K^n$ the conjugacy class
$\vec{h}^{K^n}$ has cardinality $|\vec{h}^{K^n}| \geq
2^{\supp(\vec{h})}$.  If $d \geq \frack{n}{2 |K|^t}$, then $\supp(H)
\geq \frack{n}{2 |K|^t}$ and
 $$
 |H| \sum_{\vec{h} \neq \vec{1}} \frac{1}{|\vec{h}^{K^n}|} 
 \leq |H|^2 \,2^{-\supp(H)} 
 \leq |K|^{2|K|^t} 2^{-n/2|K|^t} \, ,
 $$
 by Lemma~\ref{lem:subgroup-size}.  Writing $2|K|^t \le \alpha \sqrt{n}$, we have
 $$
 |H| \sum_{\vec{h} \neq \vec{1}} \frac{1}{|\vec{h}^{K^n}|} 
 \leq \left( 2^{-1/\alpha} |K|^\alpha \right)^{\sqrt{n}}  \, . 
 $$
Combining equations~\eqref{eq:general-restriction} and~\eqref{eq:pr-decomp} gives 
\[
\Pr_{\rho, \{h^{(m)}\}}[ X_H =0] 
\leq \frac{2 \alpha}{\sqrt{n}} + \left( 2^{-1/\alpha} |K|^{\alpha}\right)^{\sqrt{n}} \, ,
\]
completing the proof. The bounds of~\eqref{eq:rep-mass}
and~\eqref{eq:rep-guarantee} are achieved by setting $\alpha =
1/(2\sqrt{\log |K|})$, in which case
\[
\frac{2 \alpha}{\sqrt{n}} + \left( 2^{-1/\alpha}
  |K|^{\alpha}\right)^{\sqrt{n}} \leq \frac{1}{\sqrt{n\log |K|}} +
\left(\frac{1}{2\sqrt{2}}\right)^{\sqrt{n \log |K|}}  \leq \frac{2}{\sqrt{n\log |K|}} \, .\qedhere
\]
\end{proof}

\paragraph{Acknowledgments}  S.L. is supported by NSF CAREER award 1350481, and C.M. and A.R. are supported by NSF grant CCF-1247081.

\end{document}